\documentclass{article}

  \usepackage{amssymb}
  \usepackage{amsthm}
  \usepackage{amsmath}
  

\newcommand{\Q}{\mathbb{Q}}
\newcommand{\Qpf}{\Q_{p^f}}

\newcommand{\Fpbar}{\overline{\mathbb{F}}_p}
\newcommand{\fpbar}{\Fpbar}

\newcommand{\F}{\mathbb{F}}
\newcommand{\fp}{\F_p}
\newcommand{\ef}{\F}

\DeclareMathOperator{\GL}{GL}

\theoremstyle{plain} 
\newtheorem{lemma}[equation]{Lemma}
\newtheorem{proposition}[equation]{Proposition}
\newtheorem{theorem}[equation]{Theorem}
\newtheorem{corollary}[equation]{Corollary}
\newtheorem*{axiom}{Axiom}

\theoremstyle{definition}
\newtheorem{definition}[equation]{Definition}

\theoremstyle{remark}
\newtheorem{remark}[equation]{Remark}

\title{On weight elimination for $\GL_n(\Qpf)$}
\author{John Enns}
\date{}

\begin{document}
\maketitle

\begin{abstract}
We show that the modular Serre weights of a sufficiently generic mod $p$ Galois representation of an unramified $p$-adic field are themselves generic, and give precise bounds on the genericity, by extending previous work of Emerton, Gee and Herzig. Our bounds are nearly optimal in some cases. We use this to improve recent weight elimination theorems.
\end{abstract}

Let $K/\mathbb{Q}_{p}$ be a finite extension with residue field $k$. Given a continuous Galois representation $\bar{\rho}:G_{K}\rightarrow\mathrm{GL}_{n}(\fpbar)$, the mod $p$ Langlands program hopes to associate with it in a natural way a smooth admissible $\fpbar$-representation of $\mathrm{GL}_{n}(K)$, denoted $\Pi(\bar{\rho})$ (or possibly a family of such representations). In particular, associated with $\bar{\rho}$ there should be the set $W_{\mathrm{conj}}(\bar{\rho})$ of isomorphism classes of irreducible $\fpbar$-representations of $\mathrm{GL}_{n}(k)$ consisting of the finitely many constituents of the $\mathrm{GL}_{n}(\mathcal{O}_{K})$-socle of $\Pi(\bar{\rho})$. These are known as the ``Serre weights'' of $\bar{\rho}$. Because a mod $p$ local Langlands correspondence remains purely hypothetical except for the case $n=2$, $K=\mathbb{Q}_{p}$ (see \cite{Bre10} for an overview), the definition of $W_{\mathrm{conj}}(\bar{\rho})$ remains conjectural. On the other hand, when $K$ is unramified the papers \cite{Her09} and \cite{GHS} define a set of regular weights $W^{?}(\bar{\rho})$, and it is predicted that the set of regular weights in $W_{\mathrm{conj}}(\bar{\rho})$ should be a subset of $W^{?}(\bar{\rho})$, and coincide with $W^{?}(\bar{\rho})$ when $\bar{\rho}$ is semisimple. Here regularity is a genericity condition; see Section \ref{sec:Background} below for the definition. 

When $\bar{\rho}$ is the local part at $p$ of a global automorphic mod $p$ Galois representation $\bar{r}$, we can define a concrete set of weights $W(\bar{\rho})$, called the set of modular Serre weights of $\bar{\rho}$ using automorphic forms attached to $\bar{r}$. This set is what has traditionally been the subject of ``Serre weight conjectures'' in the literature. It a priori depends on the choice of globalization $\bar{r}$, but the expected local-global compatibility between mod $p$ local Langlands and spaces of automorphic forms means that it should in fact be equal to $W_{\mathrm{conj}}(\bar{\rho})$. There has been much recent progress in proving weight elimination theorems in these global situations. These results typically say that $W_{\mathrm{gen}}(\bar{\rho})\subseteq W^{?}(\bar{\rho})$ where $W_{\mathrm{gen}}(\bar{\rho})$ is defined to be the subset of $W(\bar{\rho})$ consisting of modular weights satisfying some fixed genericity condition. (Thus ``weight elimination'' refers to eliminating weights as possibilities for modular weights of $\bar{\rho}$). See for example the main results of \cite{EGH13} and \cite{LLHLM18}.

The purpose of this paper is to provide a method for improving these results by a priori eliminating all non-generic weights when $K=\mathbb{Q}_{p^{f}}$ is unramified and $\bar{\rho}$ is generic, thus allowing one to replace $W_{\mathrm{gen}}(\bar{\rho})$ by $W(\bar{\rho})$ in the aforementioned weight elimination theorems. Our main result (Theorem \ref{thm:main wt elim}) states that if $\bar{\rho}$ is sufficiently generic then all weights in $W(\bar{\rho})$ are also sufficiently generic, and gives precise bounds. In fact the bounds we give are nearly optimal in a precise sense when $f=1$ (see Remark \ref{rem:optimality}). The proof involves a generalization of techniques from \cite{EGH13}. Roughly speaking the idea is to relate modularity of Serre weights to the existence of lifts of $\bar{\rho}$ with prescribed $p$-adic Hodge-theoretic properties. In fact, we axiomatize the necessary relationship between Serre weights and lifts in Section \ref{sec:Background}, and in Section \ref{sec:Weight-elimination} we prove an abstract weight elimination theorem (Theorem \ref{thm:main wt elim}) assuming this axiom. In Section \ref{sec:Applications} as an application we show that the axiom holds in the global situation considered in \cite{EGH13} and \cite{LLHLM18} and consequently our result improves the main weight elimination theorems in both of those papers. 

\subsection*{Notation}

Throughout we fix a prime $p$. In order for our main result (Theorem \ref{thm:main wt elim}) to not be vacuous, we must eventually assume $p\geq2n+5$, where $n$ is the dimension of the Galois representation in question. We often speak of $\delta$-genericity, whether of Serre weights or Galois representations. Whenever this is mentioned, it is understood that $\delta$ is a positive integer at most $(p-1)/2$. 

If $K/\mathbb{Q}_{p}$ is a finite extension we denote its ring of integers by $\mathcal{O}_{K}$, its residue field by $k$, and its maximal unramified subextension by $K_{0}$. By $\omega_{n}:I_{K}\rightarrow k^{\times}$ we denote the standard character of the inertia group of $K$ of niveau $n\in\mathbb{N}$. Throughout, $E$ is a sufficiently large finite extension of $\mathbb{Q}_{p}$ which serves as the coefficient field for our representations. Its ring of integers is denoted $\mathcal{O}_{E}$ and its residue field by $\ef$. We denote the Teichm\"{u}ller lift $\ef^{\times}\rightarrow\mathcal{O}_{E}^{\times}$ by a tilde $x\mapsto\tilde{x}$. We tacitly fix a choice of embedding $\sigma_{0}:k\hookrightarrow\ef$ and write $\omega_{n}$ also for the composition $I_{K}\rightarrow\ef^{\times}$. None of our results depend on this choice. Let $\bar{\mathbb{Q}}_{p}$ denote a fixed algebraic closure of $E$, having ring of integers $\bar{\mathbb{Z}}_{p}$ and residue field $\fpbar$. We normalize the local reciprocity map $K^{\times}\rightarrow\mathrm{Gal}(\bar{K}/K)^{\mathrm{ab}}$ so that it takes uniformizers to geometric Frobenius elements. We also write $\omega_{n}$ for either of the characters of the same name above thought of as characters of $\mathcal{O}_{K}^{\times}$ via the local reciprocity map. In this way, if $K_{0}$ is degree $f$ then $\omega_{f}:k^{\times}\rightarrow\ef^{\times}$ is just the map induced by the embedding $\sigma_{0}$. 

If $\rho:G_{K}\rightarrow\mathrm{GL}_{n}(E)$ is a potentially semistable representation, we define its associated Weil-Deligne representation $\mathrm{WD}(\rho)$ as in \cite{EGH13}. An inertial type is a representation $I_{K}\rightarrow\mathrm{GL}_{n}(E)$ with open kernel that extends to a representation of the Weil group $W_{K}$. We say that $\rho$ as above has inertial type $\tau$ if $\mathrm{WD}(\rho)|_{I_{K}}=\tau$. We normalize the definition of Hodge-Tate weights so that the cyclotomic character has Hodge-Tate weight $\{-1\}$.

Let $T$ denote the standard diagonal torus in $\mathrm{Res}_{k/\fp}(\mathrm{GL}_{n})$, and $R$ the standard root system with simple roots $\Delta$ corresponding to the Borel subgroup of upper triangular matrices. If $k$ has degree $f$ over $\fp$ we choose an isomorphism $(\mathrm{Res}_{k/\fp}(\mathrm{GL}_{n}))_{\fpbar}\cong\prod_{j=0}^{f-1}\mathrm{GL}_{n,\fpbar}$ by having the $j$th factor correspond to the field embedding $\sigma_{0}^{p^{-j}}:k\hookrightarrow\fpbar$. We write $X(T)$ for the character group of $T$, which we identify with $(\mathbb{Z}^{n})^{f}$ via the previous isomorphism, and define the $p$-restricted weights of $T$ to be $X_{1}(T)=\{\lambda\in X(T)\,|\,\langle\lambda,\alpha^{\vee}\rangle\in[0,p-1]\ \forall\alpha\in\Delta\}$ and $X^{0}(T)=\{\lambda\in X(T)\,|\,\langle\lambda,\alpha^{\vee}\rangle=0\ \forall\alpha\in R^{+}\}$ which are identified with $\{(\lambda_{j})_{j=0}^{f-1}\in(\mathbb{Z}^{n})^{f}\,|\,\lambda_{j}^{i}-\lambda_{j}^{i+1}\in[0,p-1]\}$ and $\{(\lambda_{j})_{j=0}^{f-1}\in(\mathbb{Z}^{n})^{f}\,|\,\lambda_{j}=(a_{j},\ldots,a_{j})\}$ respectively. We let $\pi:X(T)\rightarrow X(T)$ denote the automorphism that sends $(\lambda_{j})_{j}$ to $(\lambda_{j+1})_{j}$. 

If $V$ is a finite length representation then $V^{\mathrm{ss}}$ denotes its semisimplification. If $d\geq1$ we set $e_{d}=p^{d}-1$. We often consider $p$-adic expansions of the form $\sum_{j=0}^{d-1}\alpha_{j}p^{j}$. The indices on the coefficients $\alpha_{j}$ are considered to be defined cyclically modulo $d$, so for example $\alpha_{d}=\alpha_{0}$. 

\section{Preliminaries and the weight elimination axiom}\label{sec:Background}

Let $k/\fp$ be of degree $f$ as in the Notation section. Isomorphism classes of irreducible $\fpbar$-representations of $\mathrm{GL}_{n}(k)$, known informally as weights, are naturally parametrized by $X_{1}(T)/(p-\pi)X^{0}(T)$ (see Lemma 9.2.3 of \cite{GHS}). We write $F(\lambda)$ for the weight corresponding to $\lambda=(\lambda_{j}^{0},\ldots,\lambda_{j}^{n-1})_{j=0}^{f-1}\in X_{1}(T)$. Observe that the differences $\lambda_{j}^{i}-\lambda_{j}^{i+k}$ for $k\geq1$ are independent of the choice of representative $\lambda$, so the following definition makes sense.

\begin{definition}\label{def:gen Serre wts}
Let $F$ be a weight for $\mathrm{GL}_{n}(k)$. We say that $F$ is \emph{regular} if it is of the form $F(\lambda)$ where $\lambda_{j}^{i}-\lambda_{j}^{i+1}\leq p-2$ for each $i,j$. We say that $F$ is \emph{$\delta$-generic} if it is of the form $F(\lambda)$ where $\lambda_{j}^{i}-\lambda_{j}^{i+k}\in[\delta,p-1-\delta]\mod p-1$ for each $i,j$ and $k\geq1$. Note that 1-generic implies regular.
\end{definition}
Most weights are $\delta$-generic, in the sense that if $f,n$ and $\delta$ are fixed, the proportion of weights for $\mathrm{GL}_{n}(k)$ that are $\delta$-generic approaches 1 as $p\rightarrow\infty$. 

We next recall some consequences of the classification of mod $p$ Galois representations. Let $K_{0}$ be the unramified extension of $\mathbb{Q}_{p}$ of degree $f$ (what follows holds without the unramified assumption but our results require it) having residue field $k$. A continuous semisimple representation $\bar{\rho}:G_{K_{0}}\rightarrow\mathrm{GL}_{n}(\fpbar)$ is a direct sum $\bigoplus_{i=0}^{N}\bar{\rho}_{i}$ where $\bar{\rho}_{i}:G_{K_{0}}\rightarrow\mathrm{GL}_{n_{i}}(\fpbar)$ is irreducible; $\sum_{i=0}^{N}n_{i}=n$. Each $\bar{\rho}_{i}|_{I_{K_{0}}}$ will be of the form $\omega_{n_{i}f}^{m_{i}}\oplus\omega_{n_{i}f}^{qm_{i}} \oplus\cdots\oplus\omega_{n_{i}f}^{q^{n_{i}-1}m_{i}}$ where $\frac{q^{n_{i}}-1}{q^{a}-1}\nmid m_{i}$ for any $a|n_{i}$. We may write $m_{i}=\sum_{j=0}^{n_{i}f-1}x_{j}^{i}p^{j}$ where the digits are uniquely specified by stipulating that $x_{j}^{i}\in[0,p-1]$, not all equal to $p-1$. The result of multiplying $m_{i}$ by $q$ is to shift the index $j$ by $f$, so the multiset of digits $\{x_{j}^{i},x_{j+f}^{i},\ldots,x_{j+(n_{i}-1)f}^{i}\}$ is uniquely determined by $\bar{\rho}_{i}$. Hence $\bar{\rho}$ uniquely determines a multiset of $n$ integers 
\begin{equation*}
S_{j}(\bar{\rho})=\{x_{j}^{0},\ldots,x_{j+(n_{0}-1)f}^{0},x_{j}^{1},\ldots,x_{j+(n_{1}-1)f}^{1},\ldots,x_{j}^{N},\ldots,x_{j+(n_{N}-1)f}^{N}\}
\end{equation*}
for each $0\leq j\leq f-1$.

\begin{definition}\label{def:gen gal repns}
If $S=\{s_{0},\ldots,s_{n-1}\}$ is a multiset of integers in $[0,p-1]$, we say that $S$ is \emph{$\delta$-generic} if $s_{a}-s_{b}\in[\delta,p-1-\delta]\mod p-1$ for each $a\neq b$. This implies that the elements of $S$ are distinct if $\delta\geq1$.

Let $\bar{\rho}:G_{K_{0}}\rightarrow\mathrm{GL}_{n}(\fpbar)$ be a continuous representation. We say that $\bar{\rho}$ is \emph{$\delta$-generic} if for each $0\leq j\leq f-1$ the multiset $S_{j}(\bar{\rho}^{\mathrm{ss}})$ is $\delta$-generic. 
\end{definition}
This is simply Definition 3.7 of \cite{LLHLM18} made explicit. As before, most continuous Galois representations are $\delta$-generic.

Next we recall the categories of $p$-adic Hodge theoretic data we need. Let $K=K_{0}((-p)^{1/e_{f}})$.
\begin{definition}\label{def:breuil (sub)modules}
Let $r$ be a nonnegative integer $\leq p-2$. We refer to Section 3.2 of \cite{EGH13} for the definition of a Breuil module over $\mathbb{F}$ of height $r$ with descent data from $K$ to $K_{0}$. These form a category $\mathbb{F}-\mathrm{BrMod}_{\mathrm{dd}}^{r}$. For us, it matters only that an object of this category is in particular a free $(k\otimes_{\fp}\mathbb{F})[u]/u^{ep}$-module $M$ with an action of $\mathrm{Gal}(K/K_{0})$ obeying $g((\alpha\otimes\beta)u^{i}m)=((\omega_{f}(g)^{i}\alpha)\otimes\beta)u^{i}m$ for $\alpha\in k$, $\beta\in\ef$, $g\in\mathrm{Gal}(K/K_{0})$ and $m\in M$, where $\omega_{f}:G_{K_{0}}\rightarrow k^{\times}$ denotes the extension of $\omega_{f}:I_{K_{0}}\rightarrow k^{\times}$ sending $g$ to $g((-p)^{1/e_{f}})/(-p)^{1/e_{f}}\mod p$. 

We refer to Definition 2.2.1 of \cite{HLM17} for the definition of a Breuil submodule of an object $M$ of $\ef-\mathrm{BrMod}_{\mathrm{dd}}^{r}$. Again, for us it only matters that a Breuil submodule of $M$ is in particular a sub-$(k\otimes_{\fp}\ef)[u]/u^{ep}$-module preserved by the action of $\mathrm{Gal}(K/K_{0})$. 

There is a rank-preserving covariant functor $T_{\mathrm{st}}^{r}$ from $\mathbb{F}-\mathrm{BrMod}_{\mathrm{dd}}^{r}$ to the category of continuous $\ef$-representations of $G_{K_{0}}$. See Section 2.2 of \cite{HLM17} and note that the same functor in \cite{EGH13} is written $T_{\mathrm{st}}^{*,r}$.
\end{definition}
The following theorem gathers some of the results about Breuil modules that we need. 

\begin{theorem}\label{thm:existence breuil mods}
Suppose $\rho:G_{K_{0}}\rightarrow\mathrm{GL}_{n}(E)$ is a continuous representation such that $\rho|_{G_{K}}$ is semistable with Hodge-Tate weights contained in $[-r,0]$, and $\rho$ has inertial type $\bigoplus_{i=1}^{n}\chi_{i}$ where $\chi_{i}:I_{K_{0}}\rightarrow\mathcal{O}_{E}^{\times}$. Let $\bar{\rho}$ be the reduction of any $G_{K_{0}}$-stable lattice in $\rho$. Then there is an object $M$ of $\ef-\mathrm{BrMod}_{\mathrm{dd}}^{r}$ with $T_{\mathrm{st}}^{r}(M)\cong\bar{\rho}$ having a $(k\otimes_{\fp}\ef)[u]/u^{ep}$-basis $(v_{i})_{i=1}^{n}$ on which $\mathrm{Gal}(K/K_{0})$ acts as $g\cdot v_{i}=(1\otimes\bar{\chi}_{i}(g))v_{i}$. Moreover, if $\bar{\rho}_{0}\subseteq\bar{\rho}$ is a subrepresentation, then there exists a Breuil submodule $M_{0}\subseteq M$ such that $T_{\mathrm{st}}^{r}(M_{0})\cong\bar{\rho}_{0}$.
\end{theorem}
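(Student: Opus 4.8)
The plan is to descend the problem into integral $p$-adic Hodge theory: realize a chosen $G_{K_0}$-stable lattice in $\rho$ by a strongly divisible $\cO_E$-module with descent data, split off the inertial type (which is a sum of characters) to build the required basis, reduce modulo a uniformizer of $\cO_E$, and finally read off the ``moreover'' clause from the known dictionary between Breuil submodules and subrepresentations.

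First, fix a $G_{K_0}$-stable $\cO_E$-lattice $L\subseteq\rho$ and a uniformizer $\varpi_E$ of $\cO_E$. Since $\rho|_{G_K}$ is semistable with Hodge--Tate weights in $[-r,0]$ (with $r\le p-2$) and $\rho$ is defined over $K_0$ with inertial type $\bigoplus_{i=1}^{n}\chi_i$, the theory of strongly divisible $\cO_E$-modules with descent data from $K$ to $K_0$ --- see \cite[\S3.2]{EGH13} and the work of Breuil and Liu cited there --- attaches to $L$ a strongly divisible $\cO_E$-lattice $\widehat{M}$ with descent data and $T_{\mathrm{st}}(\widehat{M})\cong L$. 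The descent-data action of $\Gal(K/K_0)$ on $\widehat{M}$ is by a finite group of order $e_f$ prime to $p$, hence semisimple, and it recovers the inertial type; thus $\widehat{M}$ decomposes according to its $\chi_i$-isotypic components, and choosing an $\cO_E$-basis $(e_i)$ adapted to this decomposition produces vectors on which $g\in\Gal(K/K_0)$ acts through $\chi_i$. Reducing modulo $\varpi_E$ yields an object $M$ of $\ef-\mathrm{BrMod}_{\mathrm{dd}}^{r}$ with basis $v_i:=e_i\bmod\varpi_E$ satisfying $g\cdot v_i=(1\otimes\bar{\chi}_i(g))v_i$ --- the twisted semilinearity over $(k\otimes_{\fp}\ef)[u]/u^{ep}$ being forced by $g(u)=\omega_f(g)u$ --- and by compatibility of $T_{\mathrm{st}}^{r}$ with reduction mod $\varpi_E$ we get $T_{\mathrm{st}}^{r}(M)\cong L/\varpi_E L=\bar{\rho}$.

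For the final assertion I would invoke the fact that $T_{\mathrm{st}}^{r}$ is exact and that, for our fixed $M$, the poset of Breuil submodules of $M$ is in order-preserving bijection with the poset of $G_{K_0}$-subrepresentations of $T_{\mathrm{st}}^{r}(M)$ --- this is essentially the point of the notion introduced in \cite[Def.~2.2.1]{HLM17} --- and cite it rather than reprove it. Given the bijection, $\bar{\rho}_0\subseteq\bar{\rho}$ is the image of a unique Breuil submodule $M_0\subseteq M$, and $T_{\mathrm{st}}^{r}(M_0)\cong\bar{\rho}_0$. A self-contained proof would instead pass from $M$ to its associated Kisin module and then to its \'etale $\varphi$-module, where subrepresentations of $\bar{\rho}$ correspond to sub-$\varphi$-modules by Fontaine's theory, transport the sub-$\varphi$-module attached to $\bar{\rho}_0$ back to a sub-$(k\otimes_{\fp}\ef)[u]/u^{ep}$-module of $M$, and check that it is stable under the descent data and inherits the Frobenius and height conditions, hence is a Breuil submodule of height $\le r$.

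I would not belabor the routine bookkeeping: matching the normalization of $T_{\mathrm{st}}$ used here with the $T_{\mathrm{st}}^{*,r}$ of \cite{EGH13}, checking that the descent-data isotypic characters reduce to the $\bar{\chi}_i$, and verifying the semilinearity identity. The step I expect to be the real obstacle is the surjectivity in the Breuil-submodule/subrepresentation correspondence --- equivalently, that the submodule of $M$ cut out by an arbitrary subrepresentation $\bar{\rho}_0$ still satisfies the defining height and Frobenius conditions --- which is exactly the technical input isolated in \cite{HLM17} and which I would quote.
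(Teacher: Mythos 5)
Your proposal is correct and takes the same route as the paper. The paper's own proof is a two-line citation---Proposition 3.3.1 of \cite{EGH13} for everything except the final sentence, and Proposition 2.3.5 of \cite{HLM17} for the Breuil-submodule/subrepresentation correspondence---and what you have written is essentially an unpacking of what those two results say: build a strongly divisible $\cO_E$-lattice with descent data from the given $G_{K_0}$-stable lattice, split off the $\chi_i$-isotypic components using that $|\Gal(K/K_0)|=e_f$ is prime to $p$, reduce modulo $\varpi_E$, and then quote the order-preserving correspondence between Breuil submodules and subrepresentations for the ``moreover''. You correctly pinpoint that last correspondence as the genuine technical input; the paper agrees, deferring to \cite{HLM17} for exactly that step.
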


\begin{proof}
All statements except the last follow from Proposition 3.3.1 of \cite{EGH13}. The last statement follows from Proposition 2.3.5 of \cite{HLM17}.
\end{proof}
In the statement of the theorem, we refer to a Breuil module having a basis as claimed as being of \emph{type} $\bigoplus_{i=1}^{n}\bar{\chi}_{i}$. 

Next we present a general lemma to be used later. The first part is well known, but we lacked a reference for the second part so we have given a proof.
\begin{lemma}\label{lem:semisimple lattice}
Let $E/\mathbb{Q}_{p}$ be a finite extension and $\rho$ a continuous representation of a compact group $G$ on an $n$-dimensional $E$-vector space. Then
\begin{enumerate}
\item There exists a stable $\mathcal{O}_{E}$-lattice in $\rho$, and the semisimplification of the reduction modulo $p$ of any stable $\mathcal{O}_{E}$-lattice is independent of the choice of lattice.
\item After extending coefficients (by a finite amount depending only on $n$) there exists a stable $\mathcal{O}_{E}$-lattice whose reduction modulo $p$ is semisimple.
\end{enumerate}
\end{lemma}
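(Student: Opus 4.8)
Part (1) is standard: a stable lattice exists by averaging any lattice over $G$ (using compactness and that $G$ preserves a lattice up to finite index, or simply that the image of $G$ is a compact subgroup of $\GL_n(E)$), and independence of the semisimplification of the reduction is the Brauer--Nesbitt theorem together with the fact that two stable lattices become commensurable after scaling, so their reductions have the same Jordan--Hölder factors. I would dispatch this in a sentence or two with a reference. The substance is Part (2).

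For Part (2), the plan is to argue by induction on $n$. If $\bar\rho$ (the reduction of some stable lattice, well-defined up to semisimplification by Part (1)) is already irreducible we are done. Otherwise, let $\bar\sigma$ be an irreducible subrepresentation of the reduction $\bar\rho=L/pL$ of a chosen stable lattice $L$; its preimage in $L$ is a $G$-stable sublattice $L'\subseteq L$ with $pL\subseteq L'\subseteq L$, and $L'$ is a stable lattice in a subrepresentation? No---here is the point where one must be careful: $\bar\sigma$ need not lift to a $G$-stable $E$-subspace of $\rho$, so I cannot directly split $\rho$. Instead I would run the classical "lattice chain" argument: consider the $\mathcal{O}_E$-submodules of $\rho$ obtained by repeatedly pulling back subrepresentations; one produces a decreasing chain of stable lattices $L=L_0\supsetneq L_1\supsetneq\cdots$ realizing, in the reductions, a fixed composition series of $\bar\rho^{\mathrm{ss}}$, and the reduction $L_i/pL_i$ is an extension whose socle and cosocle we can track. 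The cleanest route, and the one I would actually write up, is: extend $E$ so that all the Jordan--Hölder constituents $\bar\rho_1,\dots,\bar\rho_m$ of $\bar\rho^{\mathrm{ss}}$ are defined over $\ef$ (this is the "finite extension depending only on $n$" part, since each constituent has dimension $\le n$ and hence is defined over an extension of degree $\le n!$ or so), then show that among all $G$-stable lattices there is one whose reduction is $\bigoplus_i \bar\rho_i$.

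To produce that lattice I would use the following device. Since the reduction functor is only well-defined up to the choice of lattice, consider the (nonempty, by Part (1)) set of homothety classes of stable lattices; this is the vertex set of the Bruhat--Tits building, but more elementarily: pick a stable lattice $L$ and let $\bar\sigma=\soc(L/pL)$. Its preimage $\tilde L$ satisfies $pL\subseteq \tilde L\subseteq L$, and $L/\tilde L$ is a nonzero quotient of $L/pL$; iterating, $p\tilde L\subseteq$ (preimage of $\soc(\tilde L/p\tilde L)$) $\subseteq \tilde L$. The classical observation (going back to arguments in the theory of lattices, cf. the proof that $\bar\rho^{\mathrm{ss}}$ is well-defined) is that after finitely many such steps one returns to a lattice homothetic to one already seen, and—crucially, after possibly enlarging $E$ so that $\rho$ restricted to an open subgroup, or rather so that the relevant extension classes are "split as far as integrally possible"—one can choose at each stage the pullback so that the resulting reduction is a direct sum. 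Concretely: if $L/pL$ has socle $\bar\sigma$ and cosocle $\bar\tau$ with $\bar\sigma\not\cong$ everything else, passing to $\tilde L$ swaps the roles, and by Fitting/length-counting the multiplicity of each constituent in the reduction is constant; the reduction is semisimple iff socle $=$ cosocle $=$ whole thing, and one shows a lattice achieving this exists by choosing, compatibly, splittings of the relevant $\mathcal{O}_E$-module extensions, which exist once $E$ is large enough because the obstruction lives in a finite torsion group killed by a bounded power of $p$ (bounded in terms of $n$), and scaling by that power of $p$—equivalently, enlarging $E$ to extract roots—kills it.

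**Main obstacle.** The genuine difficulty, and where I would spend the most care, is the second clause of Part (2): that the required coefficient extension depends \emph{only on $n$}, not on $\rho$. The existence of \emph{some} finite extension over which a semisimple-reduction lattice appears is comparatively soft (the Jordan--Hölder constituents are defined over a bounded extension, and one inducts); pinning the bound to $n$ alone requires a uniform bound on the "integral obstruction"—e.g. showing that if $0\to A\to B\to C\to 0$ is an extension of $G$-stable lattices with $A,C$ having semisimple reduction and $\rho\otimes E$ semisimple, then $B\otimes(\mathcal{O}_E\text{ of ramification }e)$ contains a stable lattice splitting the reduction once $e$ exceeds a constant depending only on $n=\dim A+\dim C$. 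I would prove this by a direct cohomological computation: the set of stable lattices in $B\otimes E$ lying between $pB$ and $B$ with prescribed associated graded maps to $H^1(G, \Hom(\bar C,\bar A))$ (a finite $\ef$-module), and the image of the "semisimplifying" lattice after ramified base change is divisible, so vanishes; making "enough ramification" quantitative gives a bound in terms of the exponent of $H^1$ of a finite group acting on a space of dimension $\le n^2$, which is bounded purely in terms of $n$. Writing this last estimate carefully is the crux.
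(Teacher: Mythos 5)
Your Part (1) matches the paper (compactness gives a stable lattice; Brauer--Nesbitt gives independence of $\bar\rho^{\mathrm{ss}}$). For Part (2) you have the right instinct---enlarge $E$ to extract roots of the uniformizer and rescale part of a basis---but you never commit to a construction, and the cohomological scaffolding you build around the idea has real gaps. The group $H^1(G,\Hom(\bar C,\bar A))$ is an $\ef$-vector space and hence already killed by $p$; bounding its ``exponent'' is vacuous and in any case does not by itself produce the desired lattice. More seriously, at one point you slide toward actually splitting a short exact sequence $0\to A\to B\to C\to 0$ of $\mathcal{O}_E[G]$-lattices, which cannot happen whenever $\rho$ is not semisimple (and the lemma does not assume $\rho$ semisimple): the obstruction in $\mathrm{Ext}^1_{E[G]}(C\otimes E,A\otimes E)$ is nonzero over $E$ and survives all base change, so no amount of ramification kills it. The goal is only to split the \emph{reduction}, a strictly weaker task, and the $H^1$ framing obscures this. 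Finally, $H^1(G,-)$ of a compact group with infinite image is not governed by the image of $G$ in a finite group acting on a space of dimension $\le n^2$, so the ``bound purely in terms of $n$'' does not fall out of the finite-group argument you propose.

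The paper's argument is elementary and bypasses all of this. Fix a stable lattice $\Lambda$ and a subrepresentation $\bar\rho_0\subseteq\bar\rho=\Lambda/\varpi_E$. Choose an $\mathcal{O}_E$-basis $e_1,\ldots,e_n$ of $\Lambda$ with $\bar e_1,\ldots,\bar e_k$ a basis of $\bar\rho_0$, pass to $E'=E(\varpi_E^{1/2})$, and let $\Lambda'$ be the $\mathcal{O}_{E'}$-span of $\varpi_E^{-1/2}e_1,\ldots,\varpi_E^{-1/2}e_k,e_{k+1},\ldots,e_n$. Since the bottom-left $(n-k)\times k$ block of the matrix of each $g\in G$ in the basis $(e_i)$ lies in $\varpi_E\mathcal{O}_E$ (this is exactly $G$-stability of $\bar\rho_0$), one checks directly that $\Lambda'$ is $G$-stable and that both off-diagonal blocks in the rescaled basis land in $\varpi_{E'}\mathcal{O}_{E'}$, so that $\Lambda'/\varpi_{E'}\cong\bar\rho_0\oplus\bar\rho/\bar\rho_0$. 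Taking $\bar\rho_0$ to be the socle at each stage and iterating terminates after at most $n-1$ steps, yielding a semisimple reduction after a totally ramified extension of degree at most $2^{n-1}$. In particular the uniform bound depending only on $n$, which you flagged as the crux, is automatic once one uses this rescaling device.
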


\begin{proof}
It is well-known that the existence of a stable $\mathcal{O}_{E}$-lattice follows from continuity and the compactness of $G$ and the statement about independence of the lattice follows from the Brauer-Nesbitt theorem (see for example Corollary 2.4.8 of \cite{W}). We now prove the second part. Suppose that $\Lambda$ is a $G$-stable lattice and $\bar{\rho}_{0}\subseteq\bar{\rho}:=\Lambda/\varpi_{E}$ is a subrepresentation. Let $e_{1},\ldots,e_{n}$ be an $\mathcal{O}_{E}$-basis of $\Lambda$ which reduces to a basis of $\bar{\rho}$ such that $e_{1},\ldots,e_{k}$ is a basis of $\bar{\rho}_{0}$. Then after extending coefficients to $E'=E(\omega_{E}^{1/2})$, consider the lattice $\Lambda'$ generated by $\varpi_{E}^{-1/2}e_{1},\ldots,\varpi_{E}^{-1/2}e_{k},e_{k+1},\ldots,e_{n}$. One easily sees that $\Lambda'$ is $G$-stable and that $\Lambda'/\varpi_{E'}\cong\bar{\rho}_{0}\oplus\bar{\rho}/\bar{\rho}_{0}$. The claim follows by iterating this argument. 
\end{proof}

Finally we introduce the weight elimination axiom. It will be satisfied in the application we consider in Section \ref{sec:Applications}, where it is a consequence of local-global compatibility at $p$ for automorphic forms on a definite unitary group. It is the crux of the entire method. 

\begin{axiom}[\textbf{WE}]\label{axm}
Let $\bar{\rho}:G_{K_{0}}\rightarrow\mathrm{GL}_{n}(\fpbar)$ be a continuous representation and let $W(\bar{\rho})$ be a set of weights of $\mathrm{GL}_{n}(k)$. We say that $W(\bar{\rho})$ satisfies the weight elimination axiom if whenever $F\in W(\bar{\rho})$ is a Jordan-H\"older constituent of the principal series representation $\mathrm{Ind}_{B_{n}(k)}^{\mathrm{GL}_{n}(k)}(\bigoplus_{i=0}^{n-1}\omega_{f}^{\xi_{i}})$ with $\xi_{i}\in\mathbb{Z}$, $\bar{\rho}$ has a potentially semistable lift $\rho:G_{K_{0}}\rightarrow\mathrm{GL}_{n}(E)$ with Hodge-Tate weights in $[-n+1,0]$ and inertial type $\bigoplus_{i=0}^{n-1}\tilde{\omega}_{f}^{\xi_{i}}$ for some sufficiently large $E$.
\end{axiom}

\begin{remark}
 In fact this axiom is weaker than what holds in our applications. For example, in the case of definite unitary groups in Section \ref{sec:Applications} we in fact know that the lift is potentially crystalline with parallel Hodge-Tate weights $(-n+1,\ldots,-1,0)$. This likely could be used to improve the main result; see Remark \ref{rem:optimality} below.
\end{remark}

\section{Weight elimination}\label{sec:Weight-elimination}

In this section we explain the weight elimination method. From now on assume that $p\geq n+1$. Let $K_{0}$ be as in the previous section.

\begin{proposition}\label{prop:reducing pss repns}
Let $\rho:G_{K_{0}}\rightarrow\mathrm{GL}_{n}(E)$ be a potentially semistable representation with Hodge-Tate weights in $[-n+1,0]$ and inertial type $\bigoplus_{s=0}^{n-1}\tilde{\omega}_{f}^{\xi_{s}}$, and write $\xi_{s}=\sum_{j=0}^{f-1}\xi_{j}^{s}p^{j}$ where $\xi_{j}^{s}\in[0,p-1]$ (not all $=p-1$ for fixed $s$). Let $n=n_{0}+\cdots+n_{N}$ be the partition determined by $\bar{\rho}^{\mathrm{ss}}$. Then
\begin{equation*}
\bar{\rho}^{\mathrm{ss}}|_{I_{K_{0}}}=\bigoplus_{i=0}^{N}\left(\omega_{n_{i}f}^{\theta_{i}} \oplus\omega_{n_{i}f}^{q\theta_{i}} \oplus\cdots\oplus\omega_{n_{i}f}^{q^{n_{i}-1}\theta_{i}}\right)
\end{equation*}
 with $\theta_{i}=\sum_{j=0}^{n_{i}f-1}(\xi_{j}^{s(i,j)}+a_{j}^{i})p^{j}$ where $a_{j}^{i},s(i,j)\in[0,n-1]$.
\end{proposition}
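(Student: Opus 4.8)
The plan is to feed $\rho$ into Theorem \ref{thm:existence breuil mods} and then read off the restriction to inertia from the Breuil module together with its submodule filtration. First, by hypothesis $\rho|_{G_K}$ is semistable with Hodge-Tate weights in $[-(n-1),0]\subseteq[-r,0]$ for $r=n-1\leq p-2$ (using $p\geq n+1$), and $\rho$ has inertial type $\bigoplus_{s=0}^{n-1}\tilde\omega_f^{\xi_s}$, which is a sum of characters of $I_{K_0}$. So Theorem \ref{thm:existence breuil mods} applies: picking any $G_{K_0}$-stable lattice with reduction $\bar\rho$, we obtain $M\in\ef-\mathrm{BrMod}^{n-1}_{\mathrm{dd}}$ with $T^{n-1}_{\mathrm{st}}(M)\cong\bar\rho$ of type $\bigoplus_s\bar\omega_f^{\xi_s}$, i.e. with a $(k\otimes_{\fp}\ef)[u]/u^{ep}$-basis $(v_s)$ on which $\mathrm{Gal}(K/K_0)$ acts through $\bar\chi_s=1\otimes\bar\omega_f(\cdot)^{\xi_s}$.

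Next I would reduce to the irreducible pieces. Since $T^{n-1}_{\mathrm{st}}$ is exact-compatible with the submodule formalism of \cite{HLM17}, and $\bar\rho^{\mathrm{ss}}=\bigoplus_i\bar\rho_i$ with $\bar\rho_i$ irreducible of dimension $n_i$, an iterated application of the last sentence of Theorem \ref{thm:existence breuil mods} (together with Lemma \ref{lem:semisimple lattice}(1), which says $\bar\rho^{\mathrm{ss}}$ is independent of the lattice, so we may pass to the semisimplification) produces Breuil submodules realizing each $\bar\rho_i$; each such sub-Breuil module is free of rank $n_i$ over $(k\otimes_{\fp}\ef)[u]/u^{ep}$, is $\mathrm{Gal}(K/K_0)$-stable, and $\mathrm{Gal}(K/K_0)$ acts on a suitable basis through an $n_i$-element sub(multi)set of the characters $\{\bar\chi_s\}$. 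The point is that the descent data on a rank-$n_i$ Breuil module whose $T^{n-1}_{\mathrm{st}}$ is the irreducible $\bar\rho_i$ pin down $\bar\rho_i|_{I_{K_0}}$ up to the ambiguity recorded in $\theta_i$: by the classification recalled before Definition \ref{def:gen gal repns}, $\bar\rho_i|_{I_{K_0}}=\omega_{n_if}^{\theta_i}\oplus\cdots\oplus\omega_{n_if}^{q^{n_i-1}\theta_i}$, and the $u$-grading on $M$ (the action $g\cdot u^jm=\omega_f(g)^ju^jm$) contributes, digit by digit in the base-$p$ expansion over the $n_if$ positions, exactly a shift $a_j^i\in[0,n-1]$ coming from the height: the matrix of Frobenius on $M$ has entries that are polynomials in $u$ of degree at most $e_f(n-1)$ per "column block", which after the standard niveau-$n_if$ unscrewing (as in the proof of Proposition 3.3.1 of \cite{EGH13}) translates into each digit of $\theta_i$ being $\xi^{s(i,j)}_j+a^i_j$ for some choice $s(i,j)\in[0,n-1]$ of which of the $n$ type-characters sits in that position and some carry/height contribution $a^i_j\in[0,n-1]$.

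The main obstacle is the last step: carefully matching the $u$-exponents appearing in the Frobenius matrix of the rank-$n_i$ Breuil submodule to the base-$p$ digits of $\theta_i$, and checking that the height bound $r=n-1$ forces $a_j^i\in[0,n-1]$ (rather than a weaker bound), uniformly in $j$ modulo $n_if$ and accounting for the cyclic wraparound in the $p$-adic expansion. Concretely one writes the Frobenius on a basis diagonalizing the descent data as $\phi(e_\ell)=\sum_m c_{\ell m}(u)\,e_m$ with $c_{\ell m}\in(k\otimes\ef)[u]/u^{ep}$ supported in $u$-degrees $\le e_f r$, uses that $T^{n-1}_{\mathrm{st}}(M_i)$ irreducible forces the companion-matrix shape (only a cyclic "ladder" of entries is nonzero modulo the maximal ideal), and then computes that the associated $G_{K_0}$-representation on inertia is $\omega_{n_if}^{\theta_i}$ and its $q$-conjugates with $\theta_i$ as claimed; this is where genericity of $\bar\rho$ (ensuring the companion shape is forced, no degeneration) is implicitly used and where one must be careful that the digits $\xi_j^s$ can exceed what a single height-$(n-1)$ slot absorbs, hence the split into $\xi_j^{s(i,j)}$ plus $a_j^i$. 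Everything else is bookkeeping.
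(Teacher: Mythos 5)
Your high-level plan (invoke Theorem \ref{thm:existence breuil mods}, pass to a semisimple reduction via Lemma \ref{lem:semisimple lattice}, and then analyze a Breuil module realizing each irreducible piece $\bar\rho_i$) matches the paper's, but the crucial reduction that makes the computation actually go through is missing. The paper does not try to understand the Frobenius matrix of a rank $n_i$ Breuil module over $K_0$ directly, which is what you propose and correctly identify as the ``main obstacle.'' Instead, it first restricts $\rho$ to the unramified extension $(K_0)_{n_i}$ of degree $n_i$, over which $\bar\rho_i$ splits as a direct sum of characters $\chi\oplus\chi^q\oplus\cdots\oplus\chi^{q^{n_i-1}}$. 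It then applies Theorem \ref{thm:existence breuil mods} with descent data from $K_{n_i}$ to $(K_0)_{n_i}$, extracts a \emph{rank one} Breuil submodule $N$ with $T_{\mathrm{st}}^{n-1}(N)=\chi$, and reads off $\chi|_{I_{(K_0)_{n_i}}}$ using the explicit classification of rank one Breuil modules with descent data (Lemma 3.3.2 of \cite{EGH13}). That classification is exactly what turns the height bound $r=n-1$ into the bound $r_j\in[0,(n-1)e_{n_if}]$ and hence $a_j^i\in[0,n-1]$. Without the base change, no comparable classification of higher-rank Breuil modules with irreducible $T_{\mathrm{st}}^{n-1}$ is at hand, and the ``companion-matrix shape is forced'' argument you sketch is not justified. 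You would also need to match the descent-data characters appearing in $N/u \hookrightarrow M/u$, which is where the choice $s(i,j)$ comes from; this congruence argument does not appear in your sketch.

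One concrete error: you assert that genericity of $\bar\rho$ is ``implicitly used'' to force the companion shape, but Proposition \ref{prop:reducing pss repns} has no genericity hypothesis whatsoever and holds for arbitrary $\rho$ with the stated Hodge--Tate weights and inertial type. Needing genericity here would be a sign that the approach is on the wrong track. A smaller point: to pass to a lattice whose reduction is semisimple you want Lemma \ref{lem:semisimple lattice}(2), not part (1); part (1) only tells you the semisimplification is well-defined, not that you can realize it directly as a reduction.
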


\begin{proof}
We use Theorem \ref{thm:existence breuil mods} along with the description of rank 1 Breuil modules in Lemma 3.3.2 of \cite{EGH13}. Note that $\rho$ becomes semistable over $K_{0}((-p)^{1/e_{f}})$. Choose some $n_{i}$ and consider $\rho^{\mathrm{}}|_{(K_{0})_{n_{i}}}$ where $(K_{0})_{n_{i}}$ denotes the unramified extension of $K_{0}$ of degree $n_{i}$. It becomes semistable over $K_{n_{i}}=(K_{0})_{n_{i}}((-p)^{1/e_{n_{i}f}})$, so by Theorem \ref{thm:existence breuil mods} and Lemma \ref{lem:semisimple lattice} there is an object $M$ of $\ef-\mathrm{BrMod}_{\mathrm{dd}}^{n-1}$ with descent data from $K_{n_{i}}$ to $(K_{0})_{n_{i}}$ having type $\bigoplus_{s=0}^{n-1}\tilde{\omega}_{f}^{\xi_{s}}$ such that $T_{\mathrm{st}}^{n-1}(M)=\bar{\rho}^{\mathrm{ss}}|_{G_{(K_{0})_{n_{i}}}}$. Let $\bar{\rho}_{i}$ be the irreducible summand of $\bar{\rho}^{\mathrm{ss}}$ corresponding to $n_{i}$. By the classification of mod $p$ Galois representations, there is a character $\chi\subseteq\bar{\rho}^{\mathrm{ss}}|_{G_{(K_{0})_{n_{i}}}}$ such that $\bar{\rho}_{i}=\chi\oplus\chi^{q}\oplus\cdots\oplus\chi^{q^{n_{i}-1}}$. Let $N\subseteq M$ be a rank 1 Breuil submodule such that $T_{\mathrm{st}}^{n-1}(N)=\chi$. 

By Lemma 3.3.2 of \cite{EGH13} (the classification of rank 1 Breuil modules) there is a basis $n$ of $N$ such that $g\in\mathrm{Gal}(K_{n_{i}}/(K_{0})_{n_{i}})$ acts via $g\cdot n=\sum_{j=0}^{n_{i}f-1}(\omega_{n_{i}f}(g)^{\kappa_{j}}\otimes1)\epsilon_{j}n$ for some integers $\kappa_{j}$, where $k_{n_{i}}$ denotes the residue field of $K_{n_{i}}$ and $\epsilon_{j}\in k_{n_{i}}\otimes_{\fp}\ef$ is the idempotent corresponding to the embedding $\sigma_{0}^{p^{-j}}:k_{n_{i}}\hookrightarrow\ef$. Moreover, there are integers $r_{j}\in[0,(n-1)e_{n_{i}f}]$ with $r_{j}\equiv p^{n_{i}f-1}\kappa_{j+1}-\kappa_{j}\mod e_{n_{i}f}$ such that 
\begin{equation*}
T_{\mathrm{st}}^{n-1}(N)|_{I_{(K_{0})_{n_{i}}}}=\omega_{n_{i}f}^{\kappa_{0}+\frac{1}{e_{n_{i}f}}\sum_{j=0}^{n_{i}f-1}r_{j}p^{n_{i}f-j}}.
\end{equation*}
As $\ef[\mathrm{Gal}(K_{n_{i}}/(K_{0})_{n_{i}})]$-modules, we have 
\begin{equation*}
\bigoplus_{j=0}^{n_{i}f-1}\omega_{n_{i}f}^{p^{-j}\kappa_{j}}=N/u\hookrightarrow M/u=\left(\bigoplus_{s=0}^{n-1}\omega_{f}^{\xi_{s}}\right)^{\oplus n_{i}f}=\left(\bigoplus_{s=0}^{n-1}\omega_{n_{i}f}^{(1+\cdots+q^{n_{i}-1})\xi_{s}}\right)^{\oplus n_{i}f}.
\end{equation*}
It follows that for each $0\leq j\leq n_{i}f-1$ we have $\kappa_{j}\equiv p^{j}(1+\cdots+q^{n_{i}-1})\xi_{s_{j}}\mod e_{n_{i}f}$ for some $0\leq s_{j}\leq n-1$.

Using this, we calculate that $p^{n_{i}f-1}\kappa_{j+1}-\kappa_{j}\equiv\sum_{t=0}^{n_{i}f-1}(\xi_{t-j}^{s_{j+1}}-\xi_{t-j}^{s_{j}})p^{t}\mod e_{n_{i}f}$. (The lower index on the $\xi_{*}^{*}$ is still defined modulo $f$.) Because for fixed $j$ we have $\xi_{t-j}^{s_{j+1}}-\xi_{t-j}^{s_{j}}\in[-p+1,p-1]$ for all $t$, with not all being $-p+1$ or $p-1$, it follows from the bound on $r_{j}$ that we may write 
\begin{equation*}
r_{j}=\sum_{t=0}^{n_{i}f-1}(\xi_{t-j}^{s_{j+1}}-\xi_{t-j}^{s_{j}})p^{t}+b_{j}e_{n_{i}f}
\end{equation*}
with $b_{j}\in[0,n-1]$. Finally, with some effort we calculate that 
\begin{equation*}
\kappa_{0}+\frac{1}{e_{n_{i}f}}\sum_{j=0}^{n_{i}f-1}r_{j}p^{n_{i}f-j}\equiv\sum_{j=0}^{n_{i}f-1}\xi_{j}^{s_{n_{i}f-j}}p^{j}+\sum_{j=0}^{n_{i}f-1}b_{j}p^{n_{i}f-j}\mod e_{n_{i}f}.
\end{equation*}
Relabelling $a_{j}^{i}=b_{n_{i}f-j}$ and setting $s(i,j)=s_{n_{i}f-j}$, we obtain 
\begin{equation*}
\chi|_{I_{(K_{0})_{n_{i}}}}=\omega_{n_{i}f}^{\sum_{j=0}^{n_{i}f-1}(\xi_{j}^{s(i,j)}+a_{j}^{i})p^{j}}.
\end{equation*}

Repeating this argument for each $0\leq i\leq N$ we arrive at the statement of the proposition.
\end{proof}

We now present the main result.

\begin{theorem}\label{thm:main wt elim}
Let $\bar{\rho}:G_{K_{0}}\rightarrow\mathrm{GL}_{n}(\fpbar)$ be a continuous Galois representation and let $W(\bar{\rho})$ be a set of weights for which Axiom (WE) holds. If $\bar{\rho}$ is $\delta$-generic with $\delta\geq2n+1$ then all weights in $W(\bar{\rho})$ are $(\delta-2n)$-generic. In particular, if $\bar{\rho}$ is $(2n+1)$-generic then all weights in $W(\bar{\rho})$ are regular. If $f=1$ and $\delta\geq n+2$ then all weights in $W(\bar{\rho})$ are $(\delta-(n+1))$-generic.
\end{theorem}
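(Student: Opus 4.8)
The plan is to show that a failure of $(\delta-2n)$-genericity of some $F \in W(\bar\rho)$ forces $\bar\rho$ to fail $\delta$-genericity. Suppose $F \in W(\bar\rho)$ is not $(\delta - 2n)$-generic. First I would use the fact, standard in this setting, that every weight $F(\lambda)$ with $\lambda$ regular enough occurs as a Jordan--Hölder constituent of some principal series $\mathrm{Ind}_{B_n(k)}^{\mathrm{GL}_n(k)}(\bigoplus_{i=0}^{n-1}\omega_f^{\xi_i})$; more precisely, one chooses the $\xi_i$ so that the associated characters are in ``lowest alcove position'' relative to $\lambda$, and the constituents of this principal series are controlled by the combinatorics already recorded in the Notation section (the identification of weights with $X_1(T)/(p-\pi)X^0(T)$ and the $p$-adic digit bookkeeping). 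Then Axiom (WE) hands us a potentially semistable lift $\rho$ of $\bar\rho$ with Hodge--Tate weights in $[-n+1,0]$ and inertial type $\bigoplus_{i=0}^{n-1}\tilde\omega_f^{\xi_i}$.

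Next I would feed $\rho$ into Proposition \ref{prop:reducing pss repns}. This tells us that, for the partition $n = n_0 + \cdots + n_N$ determined by $\bar\rho^{\mathrm{ss}}$, the digits appearing in $S_j(\bar\rho^{\mathrm{ss}})$ are exactly $\xi_j^{s(i,j)} + a_j^i$ with $a_j^i \in [0, n-1]$ — that is, each digit of $\bar\rho^{\mathrm{ss}}$ differs from one of the digits $\xi_j^s$ of the inertial type by an integer in $[0,n-1]$ (modulo $e_{n_if}$, and hence modulo $p-1$ after collapsing niveau). So genericity of the weight $F$, which by the lowest-alcove choice is equivalent to a genericity statement on the differences $\xi_j^s - \xi_j^{s'}$, translates into a genericity statement on the differences of digits of $\bar\rho^{\mathrm{ss}}$, up to an error of at most $2n$ (one $n$ from each of the two digits being compared): if the weight differences lie outside $[\delta - 2n, p-1-(\delta-2n)] \bmod p-1$, then the digit differences $S_j(\bar\rho)$ lie outside $[\delta, p-1-\delta] \bmod p-1$, contradicting $\delta$-genericity of $\bar\rho$. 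This gives the first and second assertions.

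For the sharper bound when $f = 1$, the point is that with $f=1$ the digit bookkeeping collapses: there is a single digit $\xi^s$ per character, a single $a^i \in [0,n-1]$ shift, and — crucially — the comparison can be arranged so that only one of the two digits being differenced carries the full $[0,n-1]$ ambiguity while the other is pinned down, or equivalently the constant $X^0(T)$-ambiguity in choosing the representative $\lambda$ absorbs one of the two shifts. Concretely, $\langle \lambda, \alpha_i^\vee \rangle = \lambda^i - \lambda^{i+1}$ for $f=1$ relates directly to $\xi^i - \xi^{i+1}$ via a shift by at most $n-1$ coming from a single $a^i$, together with the fixed $[0,n-1]$-type offset built into expressing $F$ as a constituent of the chosen principal series; the two sources combine to at most $n+1$ rather than $2n$. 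I would then run the same contrapositive: non-$(\delta-(n+1))$-genericity of $F$ forces non-$\delta$-genericity of $\bar\rho$.

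The main obstacle I anticipate is making the translation between genericity of $F$ and genericity of the inertial-type exponents $\xi_i$ fully precise — i.e. choosing the principal series correctly and tracking exactly how the Jordan--Hölder combinatorics (the passage through $X_1(T)/(p-\pi)X^0(T)$, the possible ``carries'' in $p$-adic digits, and the role of $\pi$ when $f>1$) interacts with the shifts $a_j^i \in [0,n-1]$ produced by Proposition \ref{prop:reducing pss repns}. Getting the constant to be exactly $2n$ in general and exactly $n+1$ when $f=1$ — rather than something slightly worse — is where the care is needed; everything else is bookkeeping with $p$-adic expansions of the kind already carried out in the proof of Proposition \ref{prop:reducing pss repns}.
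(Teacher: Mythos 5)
Your overall architecture matches the paper's: invoke Axiom (WE) to produce a lift, feed it into Proposition~\ref{prop:reducing pss repns}, then compare digit differences to conclude genericity. But the error accounting that is the whole content of the theorem is wrong in two places, and a necessary multiset-identification step is missing.

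First, the claim that ``genericity of the weight $F$ ... is \emph{equivalent} to a genericity statement on the differences $\xi_j^s - \xi_j^{s'}$'' is false for $f>1$. The relation $\lambda_{f-j}^{s}=\xi_{j}^{s}+pu_{j}^{s}-u_{j-1}^{s}$ forces $u_j^s - u_j^{s+k}\in[0,n-1]$, so modulo $p-1$ the difference $\lambda_{f-j}^{s}-\lambda_{f-j}^{s+k}$ equals $\xi_j^s-\xi_j^{s+k}$ only up to an error in $[-(n-1),n-1]$. Thus $d$-genericity of $F(\lambda)$ is \emph{implied by} $(d+n-1)$-genericity of $\{\xi_j^s\}_s$; this $(n-1)$ is a real contribution to the final constant. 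Second, your claimed error of $2n$ between $\xi$-differences and $\bar\rho$-digit-differences, attributed to ``one $n$ from each of the two digits'', is not what the proposition gives. Writing $\xi_j^{s(i,j)}=x_j^i+t_j^i p-t_{j-1}^i-a_j^i$ with $t_j^i\in\{0,1\}$ and $a_j^i\in[0,n-1]$, the difference of two such expressions modulo $p-1$ picks up an error in $[-(n+1),n+1]$ (namely $[-(n-1),n-1]$ from $a-a'$ and $[-1,1]+[-1,1]$ from the two carry terms), i.e.\ $n+1$, not $2n$. The correct decomposition is $(n+1)+(n-1)=2n$; your split $0+2n$ happens to sum to the same number but both summands are wrong, which also breaks your explanation of the $f=1$ improvement. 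When $f=1$ the $(n-1)$ term genuinely disappears because $\lambda_0^s-\lambda_0^{s+k}\equiv\xi_0^s-\xi_0^{s+k}\pmod{p-1}$ exactly (the $u$-carries contribute a multiple of $p-1$), leaving only the $n+1$; this is not a matter of $X^0(T)$-ambiguity ``absorbing a shift''.

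Separately, there is a genuine gap: Proposition~\ref{prop:reducing pss repns} only tells you about the $\xi_j^{s(i,j)}$ that actually appear, i.e.\ about a sub-multiset $\{\xi_j^{s(i,j+df)}\}_{i,d}$ of $\{\xi_j^s\}_{0\le s\le n-1}$. To conclude genericity of the full multiset $\{\xi_j^s\}_s$, and hence of $F(\lambda)$, one must show these coincide. The paper does this by first establishing $(\delta-(n+1))$-genericity of the sub-multiset and then using $\delta\ge n+2$ to see its $n$ elements are distinct, forcing it to exhaust $\{\xi_j^s\}_s$. Without this step the argument only constrains some of the $\xi_j^s$ and does not yield the stated genericity of the weight.
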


\begin{proof}
Write $\bar{\rho}^{\mathrm{ss}}|_{I_{K_{0}}}= \bigoplus_{i=0}^{N}\omega_{n_{i}f}^{m_{i}}\oplus\omega_{n_{i}f}^{qm_{i}} \oplus\cdots\oplus\omega_{n_{i}f}^{q^{n_{i}-1}m_{i}}$ where $m_{i}=\sum_{j=0}^{n_{i}f-1}x_{j}^{i}p^{j}$ with $x_{j}^{i}\in[0,p-1]$ as in the paragraph preceding Definition \ref{def:gen gal repns}. Suppose that $F(\lambda)\in W(\bar{\rho})$ is a modular weight of $\bar{\rho}$. It follows from Frobenius reciprocity and Lemma 2.3 of \cite{Her11} that $F(\lambda)$ is always a quotient of $\mathrm{Ind}_{B_{n}(k)}^{\mathrm{GL}_{n}(k)}\left(\bigotimes_{s=0}^{n-1}\omega_{f}^{\sum_{j=0}^{f-1}\lambda_{f-j}^{s}p^{j}}\right)$. Write $\sum_{j=0}^{f-1}\lambda_{f-j}^{s}p^{j}\equiv\sum_{j=0}^{f-1}\xi_{j}^{s}p^{j}\mod e_{f}$ with $\xi_{j}^{s}\in[0,p-1]$ (not all equal to $p-1$ for fixed $s$), and observe that $d$-genericity of $F(\lambda)$ for any $d\geq1$ is implied by $(d+n-1)$-genericity of the multisets $\{\xi_{j}^{s}\}^{0\leq s\leq n-1}$ for each fixed $j$. To see this, write $\lambda_{f-j}^{s}=\xi_{j}^{s}+pu_{j}^{s}-u_{j-1}^{s}$ for some integers $\{u_{j}^{s}\}_{0\leq j\leq f-1}^{0\leq s\leq n-1}$ using Lemma \ref{lem:padic} below. Then for $k\geq1$ we have
\begin{equation*}
\lambda_{f-j}^{s}-\lambda_{f-j}^{s+k}=\xi_{j}^{s}-\xi_{j}^{s+k}+p(u_{j}^{s}-u_{j}^{s+k})-(u_{j-1}^{s}-u_{j-1}^{s+k}).
\end{equation*}
The left hand side of this equation lies in $[0,(n-1)(p-1)]$ and $\xi_{j}^{s}-\xi_{j}^{s+k}\in[-p+1,p-1]$, not all equal to $-p+1$ or $p-1$ for fixed $s,k$. We deduce from this that $u_{j}^{s}-u_{j}^{s+k}\in[0,n-1]$ for all $s,j$ and $k\geq1$. It follows that if $\{\xi_{j}^{s}\}^{0\leq s\leq n-1}$ is $(d+n-1)$-generic then for each $s,j$ and $k\geq1$ we have
\begin{equation*}
\lambda_{f-j}^{s}-\lambda_{f-j}^{s+k}\in[d+n-1,p-1-(d+n-1)]+[-n+1,n-1]\mod p-1
\end{equation*}
which gives the claim. We also mention that if $f=1$ then $d$-genericity of $F(\lambda)$ is implied by $d$-genericity of the multiset $\{\xi_{0}^{s}\}^{0\leq s\leq n-1}$ since in this case we automatically have $\lambda_{0}^{s}-\lambda_{0}^{s+k}\equiv\xi_{0}^{s}-\xi_{0}^{s+k}\mod p-1$. 

Now by the weight elimination axiom, $\bar{\rho}$ has a potentially semistable lift with Hodge-Tate weights in $[-n+1,0]$ and inertial type $\bigoplus_{s=0}^{n-1}\tilde{\omega}_{f}^{\sum_{j=0}^{f-1}\xi_{j}^{s}p^{j}}$. From Proposition \ref{prop:reducing pss repns} we deduce an equality 
\begin{equation*}
m_{i}=\sum_{j=0}^{n_{i}f-1}x_{j}^{i}p^{j}\equiv\sum_{j=0}^{n_{i}f-1}(\xi_{j}^{s(i,j)}+a_{j}^{i})p^{j}\mod e_{n_{i}f}
\end{equation*}
where $a_{j}^{i}\in[0,n-1]$, for each $0\leq i\leq N$. We get from Lemma \ref{lem:padic} below that 
\begin{equation*}
\xi_{j}^{s(i,j)}=x_{j}^{i}+t_{j}^{i}p-t_{j-1}^{i}-a_{j}^{i}
\end{equation*}
for some integers $\{t_{j}^{i}\}_{0\leq j\leq n_{i}f-1}^{0\leq i\leq N}$. Because $\xi_{j}^{s(i,j)}+a_{j}^{i}-x_{j}^{i}\in[-p+1,p+n-2]$ for each $i$ and $j$, and are not all $=-p+1$ for a fixed $i$, we must have $t_{j}^{i}\in[0,1]$ (recall that the assumption of $(n+2)$-genericity in particular implies $p\geq2n+5$ here).

We now use these equations to prove genericity. Fix an index $0\leq j\leq f-1$. Let $0\leq i,i'\leq N$, $0\leq d\leq n_{i}-1$ and $0\leq d'\leq n_{i'}-1$ be such that either $i\neq i'$ or else $d\neq d'$. Then
\begin{align*}
\xi_{j}^{s(i,j+df)}-\xi_{j}^{s(i',j+d'f)} &=\xi_{j+df}^{s(i,j+df)}-\xi_{j+d'f}^{s(i',j+d'f)} \\
	& =x_{j+df}^{i}-x_{j+d'f}^{i'}+(t_{j+df}^{i}-t_{j+d'f}^{i'})p\\
	&\quad\quad\quad -(t_{j-1+df}^{i}-t_{j-1+d'f}^{i'})-(a_{j+df}^{i}-a_{j+d'f}^{i'}) \\
	&\in[\delta,p-1-\delta]+[-1,1]\\ 
	&\quad\quad\quad -[-1,1]-[-n+1,n-1]\mod p-1.
\end{align*}
It follows that the multiset $\{\xi_{j}^{s(i,j+df)}\}_{0\leq d\leq n_{i}-1}^{0\leq i\leq N}$ is $(\delta-(n+1))$-generic. In particular since $\delta\geq n+2$, it consists of $n$ distinct elements and therefore must coincide with the multiset $\{\xi_{j}^{s}\}^{0\leq s\leq n-1}$. We conclude that $\{\xi_{j}^{s}\}^{0\leq s\leq n-1}$ is $(\delta-(n+1))$-generic for each $j$ and it follows from the first paragraph that $F(\lambda)$ is $(\delta-2n)$-generic. If $f=1$ then we instead conclude that $F(\lambda)$ is $(\delta-(n+1))$-generic.
\end{proof}

The lemma that follows was used in the above proof.

\begin{lemma}\label{lem:padic}
Let $\alpha_{j}$ be integers, $0\leq j\leq d-1$. We have $\sum_{j=0}^{d-1}\alpha_{j}p^{j}\equiv0\mod e_{d}$ iff $\alpha_{j}=t_{j}p-t_{j-1}$ for some integers $\{t_{j}\}_{j=0}^{d-1}$, and if this holds then $\sum_{j=0}^{d-1}\alpha_{j}p^{j}=t_{d-1}e_{d}$.
\end{lemma}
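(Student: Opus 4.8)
The plan is to prove both directions directly, the engine being a telescoping identity; recall throughout that $e_{d}=p^{d}-1$ and that the indices on the $t_{j}$ are read cyclically modulo $d$, so $t_{-1}=t_{d-1}$.

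For the ``if'' direction, suppose $\alpha_{j}=t_{j}p-t_{j-1}$ for $0\leq j\leq d-1$. I would multiply the $j$th relation by $p^{j}$ and sum, writing $\sum_{j=0}^{d-1}\alpha_{j}p^{j}=\sum_{j=0}^{d-1}t_{j}p^{j+1}-\sum_{j=0}^{d-1}t_{j-1}p^{j}$. Reindexing the first sum by $k=j+1$ (so $k$ runs from $1$ to $d$) and peeling off the $j=0$ term $t_{-1}=t_{d-1}$ from the second, everything cancels except $t_{d-1}p^{d}-t_{d-1}=t_{d-1}e_{d}$. This simultaneously shows $\sum_{j=0}^{d-1}\alpha_{j}p^{j}\equiv 0\bmod e_{d}$ and yields the asserted formula $\sum_{j=0}^{d-1}\alpha_{j}p^{j}=t_{d-1}e_{d}$.

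For the ``only if'' direction, put $m=e_{d}^{-1}\sum_{j=0}^{d-1}\alpha_{j}p^{j}\in\mathbb{Z}$, which exists by hypothesis; note that the computation above forces $t_{d-1}=m$ in any solution. Guided by the partial-sum version of that identity, I would define $t_{k}:=p^{-(k+1)}\bigl(m+\sum_{j=0}^{k}\alpha_{j}p^{j}\bigr)$ for $0\leq k\leq d-1$ and check these are integers. The point is that $m+\sum_{j=0}^{d-1}\alpha_{j}p^{j}=m+me_{d}=mp^{d}$, so since $\sum_{j=k+1}^{d-1}\alpha_{j}p^{j}$ is divisible by $p^{k+1}$ we get $m+\sum_{j=0}^{k}\alpha_{j}p^{j}\equiv mp^{d}\equiv 0\bmod p^{k+1}$, hence $t_{k}\in\mathbb{Z}$; in particular $t_{d-1}=m$ as needed. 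To recover $\alpha_{j}=t_{j}p-t_{j-1}$: for $1\leq j\leq d-1$ subtract the defining relation $t_{j-1}p^{j}=m+\sum_{i=0}^{j-1}\alpha_{i}p^{i}$ from $t_{j}p^{j+1}=m+\sum_{i=0}^{j}\alpha_{i}p^{i}$ and divide by $p^{j}$; for $j=0$ use $t_{0}p=m+\alpha_{0}$ together with $t_{-1}=t_{d-1}=m$.

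I do not expect a real obstacle here: the only things needing care are the cyclic indexing (ensuring the wrap-around relation at $j=0$ is consistent with $t_{-1}=t_{d-1}$) and the integrality of the $t_{k}$, which is precisely what the congruence hypothesis $\sum_{j}\alpha_{j}p^{j}\equiv 0\bmod e_{d}$ is designed to deliver.
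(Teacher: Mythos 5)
Your proof is correct. The paper's proof of this lemma is the single word ``Elementary,'' so there is nothing to compare against; your telescoping identity for the ``if'' direction and the explicit construction $t_{k}=p^{-(k+1)}\bigl(m+\sum_{j\leq k}\alpha_{j}p^{j}\bigr)$ for the ``only if'' direction are the natural way to fill it in, and you have handled both the cyclic wrap-around $t_{-1}=t_{d-1}$ and the integrality of the $t_{k}$ correctly.
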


\begin{proof}
Elementary.
\end{proof}

\begin{remark}\label{rem:optimality}
Theorem \ref{thm:main wt elim} is nearly optimal (at least in the case $f=1$) in the sense that the best possible theorem of this form would give $(\delta-(n-1))$-genericity of the modular weights. This is because one may check that the predicted weight set $W^{?}(\bar{\rho})$ always contains weights whose genericity is $n-1$ less than the genericity of $\bar{\rho}$.

In order to optimize Theorem \ref{thm:main wt elim} one would likely need an improved version of Proposition \ref{prop:reducing pss repns} that gives more information about the $a_{j}^{i}$. For example, using the fact that in practice the Hodge-Tate weights of the lift of $\bar{\rho}$ are exactly $(-n+1,\ldots,-1,0)$ (see the proof of Proposition \ref{prop:axiom holds}), one might be able to show something like $\{a_{j}^{i}\}_{0\leq j\leq n_{i}-1}^{0\leq i\leq N}=\{-n+1,\ldots,-1,0\}$. Then a more careful combinatorial analysis in the proof of Theorem \ref{thm:main wt elim} would give an improved result. We did not pursue this as the relevant $p$-adic Hodge theory results were not readily available.
\end{remark}

\section{Applications}\label{sec:Applications}

In this section we briefly explain how Theorem \ref{thm:main wt elim} applies in the global setting of definite unitary groups of rank $n$ considered in the papers \cite{EGH13, LLHLM18, LLHL}.

We first recall the relevant aspects of the global set-up. The reader is pointed to Section 4.2.2 of \cite{LLHL} (which in turn builds from Section 7.1 of \cite{EGH13}) for more details. Let $F$ be a totally imaginary CM number field with maximal totally real subfield $F^{+}\neq\mathbb{Q}$. Let $c$ be the non-identity element of $\mathrm{Gal}(F/F^{+})$ and assume that all primes of $F^{+}$ dividing $p$ split in $F$ and are unramified. We write $\Sigma_{p}^{+}$ and $\Sigma_{p}$ for the places of $F^{+}$ (resp. $F$) lying above $p$. For each place $v\in\Sigma_{p}^{+}$, choose a place $w\in\Sigma_{p}$ lying above it, so that $v$ splits as $ww^{c}$ in $F$. Write $F_{p}^{+}=F\otimes_{\mathbb{Q}}\mathbb{Q}_{p}$ and $\mathcal{O}_{F^{+},p}=\mathcal{O}_{F^{+}}\otimes_{\mathbb{Z}}\mathbb{Z}_{p}$. 

Let $G$ be a reductive group over $F^{+}$ quasi-split at all finite places, that is an outer form of $\mathrm{GL}_{n}$ and which splits over $F$. Also assume that $G(F_{v}^{+})\cong U_{n}(\mathbb{R})$ at each place $v|\infty$ of $F^{+}$. We fix a choice of model $\mathcal{G}$ of $G$ defined over $\mathcal{O}_{F^{+}}$ such that $\mathcal{G}_{\mathcal{O}_{F_{v}^{+}}}$ is reductive for all places $v$ of $F^{+}$ that split in $F$. For any such place $v$ and $w|v$ we have an isomorphism $i_{w}:G(F_{v}^{+})\xrightarrow{\sim}\mathrm{GL}_{n}(F_{w})$ which restricts to an isomorphism
\begin{equation*}
i_{w}:\mathcal{G}(\mathcal{O}_{F_{v}^{+}})\xrightarrow{\sim}\mathrm{GL}_{n}(\mathcal{O}_{F_{w}}).
\end{equation*}

A \emph{Serre weight for $\mathcal{G}$} is an isomorphism class of an irreducible $\fpbar$-representation of $\mathcal{G}(\mathcal{O}_{F^{+},p})=\prod_{v|p}\mathcal{G}(\mathcal{O}_{F_{v}^{+}})$. Via the (choice of places $w$ and) isomorphisms $i_{w}$ we identify the set of Serre weights for $\mathcal{G}$ with the set of tuples $(F_{v})_{v\in\Sigma_{p}^{+}}$ where each $F_{v}=F(\lambda_{v})$ is a Serre weight for $\mathrm{GL}_{n}(k_{w})$ as in Section \ref{sec:Background} (the tuple $(F_{v})_{v\in\Sigma_{p}^{+}}$ corresponds to the representation $\bigotimes_{v\in\Sigma_{p}^{+}}F(\lambda_{v})\circ i_{w}$). All Serre weights for $\mathcal{G}$ are of this form and two such are equivalent iff the factors $F(\lambda_{v})$ are equivalent for each $v$. We make the obvious definitions in this regard: a Serre weight F for $\mathcal{G}$ is called \emph{regular} or \emph{$\delta$-generic} if each $F_{v}$ is according to the definitions in Section \ref{sec:Background}. These definitions do not depend on the choices of $w$. 

Let $W$ be any finitely generated $\bar{\mathbb{Z}}_{p}$-module with an action of $\mathcal{G}(\mathcal{O}_{F^{+},p})$. If $U\leq G(\mathbb{A}_{F^{+}}^{\infty,p})\times\mathcal{G}(\mathcal{O}_{F^{+},p})$ is a compact open subgroup then the space of algebraic automorphic forms on $G$ with level $U$ and coefficients in $W$ is denoted $S(U,W)$. It is the space of functions $f:G(F^{+})\backslash G(\mathbb{A}_{F^{+}}^{\infty})\rightarrow W$ such that $f(gu)=u_{p}^{-1}\cdot f(g)$ for all $g\in G(\mathbb{A}_{F^{+}}^{\infty})$ and $u\in U$, where $u_{p}$ denotes the image of $u$ under the projection $U\rightarrow\mathcal{G}(\mathcal{O}_{F^{+},p})$. We say that the level $U$ is unramified at a place $v$ which splits in $F$ if $U=\mathcal{G}(\mathcal{O}_{F_{v}^{+}})\times U^{v}$.

 Each space $S(U,W)$ has an action of certain abstract Hecke algebras $\mathbb{T}^{\mathcal{P}}$ consisting of Hecke operators at places in $\mathcal{P}$ where $\mathcal{P}$ is any set of places of $F$ which is a subset of finite complement of the set of places $w'$ of $F$ such that $w'|_{F^{+}}$ splits in $F$, $w'\nmid p$, and $U$ is unramified at $w'|_{F^{+}}$. The details of the definition don't matter so we again refer the reader to \cite{LLHL}. A continuous global Galois representation $\bar{r}:G_{F}\rightarrow\mathrm{GL}_{n}(\fpbar)$ gives rise to a unique maximal ideal $\mathfrak{m}_{\bar{r}}\leq\mathbb{T}^{\mathcal{P}}$ for each $\mathcal{P}$ such that $\bar{r}$ is unramified at all places in $\mathcal{P}$. 

We now fix a continuous irreducible representation $\bar{r}:G_{F}\rightarrow\mathrm{GL}_{n}(\fpbar)$. 
\begin{definition}
 Let $V$ be a Serre weight for $\mathcal{G}$. We say that $\bar{r}$ is automorphic of weight $V$, or that $V$ is a weight of $\bar{r}$, if there exists a compact open $U\leq G(\mathbb{A}_{F^{+}}^{\infty,p})\times\mathcal{G}(\mathcal{O}_{F^{+},p})$ unramified at $p$, and a set $\mathcal{P}$ as above such that $\bar{r}$ is unramified at all places in $\mathcal{P}$, such that $S(U,V)_{\mathfrak{m}_{\bar{r}}}\neq0$. The set of weights of $\bar{r}$ is denoted $W(\bar{r})$. We say that $\bar{r}$ itself is automorphic if $W(\bar{r})\neq\emptyset$. 
\end{definition}

Now we finally make the connection with the method developed in the previous section. Assume the representation $\bar{r}$ is automorphic. From now on, fix a choice of preferred place $v|p$ in $F^{+}$ with residue field $k_{w}$ of cardinality $p^{f}$. Fix any weight $V=\bigotimes_{v'|p}F_{v'}$ of $\bar{r}$ and write $V'=\bigotimes_{v'|p,v'\neq v}F_{v'}$. Then define
\begin{equation*}
W(\bar{r}|_{G_{F_{w}}})=\{\mathrm{Weights\ }F(\lambda)\mathrm{\ of\ }\mathrm{GL}_{n}(k_{w})\,|\,F(\lambda)\otimes V'\in W(\bar{r})\}.
\end{equation*}

\begin{proposition}\label{prop:axiom holds}
The weight set $W(\bar{r}|_{G_{F_{w}}})$ satisfies Axiom (WE). 
\end{proposition}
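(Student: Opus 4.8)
The plan is to deduce Axiom (WE) from classical local-global compatibility at $p$ for automorphic forms on the definite unitary group $G$, exactly as in Section 7 of \cite{EGH13}. Suppose $F(\lambda) \in W(\bar{r}|_{G_{F_w}})$ is a Jordan--H\"older constituent of $\mathrm{Ind}_{B_n(k_w)}^{\mathrm{GL}_n(k_w)}(\bigoplus_{i=0}^{n-1}\omega_f^{\xi_i})$. By definition of $W(\bar r|_{G_{F_w}})$, the Serre weight $V = F(\lambda) \otimes V'$ is a weight of $\bar r$, so there is a level $U$ unramified at $p$ and a set $\mathcal P$ with $S(U,V)_{\mathfrak m_{\bar r}} \neq 0$. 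The first step is to replace the reduction-mod-$p$ weight $V$ by a characteristic-zero lift: choose a Deligne--Lusztig representation (or more simply, since we only need a principal series type here, an inflation to $\mathrm{GL}_n(\mathcal O_{F_w})$ of the principal series $\mathrm{Ind}_{B_n(k_w)}^{\mathrm{GL}_n(k_w)}(\bigoplus_i \tilde\omega_f^{\xi_i})$ over $\mathcal O_E$) whose reduction mod $p$ has $F(\lambda)$ as a Jordan--H\"older factor, tensored at the other places with a lift $\tilde V'$ of $V'$. Call this $\bar{\mathbb Z}_p$-lattice representation $\widetilde{V}$. A standard argument (e.g.\ the proof of Lemma 7.1.6 / Corollary 7.1.7 in \cite{EGH13}, using that $S(U,-)$ is exact and that mod-$p$ reduction of $S(U,\widetilde V)$ surjects onto the $F(\lambda)$-isotypic data) shows that $S(U,\widetilde V)_{\mathfrak m_{\bar r}} \neq 0$ after possibly enlarging $E$; more precisely one picks up a characteristic-zero automorphic form contributing to this space, hence (by classical automorphy for $U_n$, base change to $\mathrm{GL}_n/F$, and the construction of Galois representations, as recalled in the references) an automorphic lift $r : G_F \to \mathrm{GL}_n(\overline{\mathbb Q}_p)$ of $\bar r$ that is de Rham at $p$ with the prescribed Hodge--Tate weights and inertial type.

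The second step is to read off the local condition at the preferred place $w$. Because $\widetilde V$ at the place $v$ was built from the principal series $\mathrm{Ind}_{B_n(k_w)}^{\mathrm{GL}_n(k_w)}(\bigoplus_i \tilde\omega_f^{\xi_i})$ inflated to $\mathrm{GL}_n(\mathcal O_{F_w})$ with trivial action of a uniformizer (so that it is the restriction to a maximal compact of a tamely ramified principal series of $\mathrm{GL}_n(F_w)$), local-global compatibility at $p$ forces the local Weil--Deligne representation $\mathrm{WD}(r|_{G_{F_w}})$ to match the inertial type $\bigoplus_{i=0}^{n-1}\tilde\omega_f^{\xi_i}$, via the inertial local Langlands correspondence for $\mathrm{GL}_n$; and since the Hodge type we imposed is the regular one with Hodge--Tate weights $(-n+1,\dots,-1,0)$, standard $p$-adic Hodge theory shows $r|_{G_{F_w}}$ is potentially crystalline (in particular potentially semistable) with Hodge--Tate weights in $[-n+1,0]$. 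Setting $\rho := r|_{G_{F_w}} : G_{F_w} \to \mathrm{GL}_n(E)$ (after a finite extension of $E$), we have produced exactly the potentially semistable lift of $\bar\rho = \bar r|_{G_{F_w}}$ of inertial type $\bigoplus_i \tilde\omega_f^{\xi_i}$ and Hodge--Tate weights in $[-n+1,0]$ required by Axiom (WE). This is also the point at which the stronger statement in the Remark after the axiom (potentially \emph{crystalline} with \emph{parallel} Hodge--Tate weights) falls out, since the chosen inertial type has no monodromy and the Hodge type is the obvious one.

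The main obstacle is the first step: extracting a characteristic-zero automorphic form with the right local behavior at $p$ from the mod-$p$ statement $S(U,V)_{\mathfrak m_{\bar r}} \neq 0$. This requires the usual input from the theory of algebraic automorphic forms on definite unitary groups --- exactness of $S(U,-)$ in the coefficient module, a suitable form of the fact that a mod-$p$ eigenform on which $\mathfrak m_{\bar r}$ acts lifts to characteristic zero (which uses either that the relevant localized space is $p$-torsion free after enlarging $U^p$, or a Deligne--Serre-type lifting lemma), and the construction of Galois representations attached to regular algebraic conjugate self-dual automorphic representations of $\mathrm{GL}_n/F$ together with local-global compatibility at $p$ at split places. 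All of this is available in \cite{EGH13} (Section 7), \cite{LLHLM18} and \cite{LLHL}, so the proof amounts to quoting these results in the right order and noting that the principal series inertial type $\bigoplus_{i=0}^{n-1}\tilde\omega_f^{\xi_i}$ is exactly what corresponds under inertial local Langlands to the principal series weight $\mathrm{Ind}_{B_n(k_w)}^{\mathrm{GL}_n(k_w)}(\bigoplus_i \omega_f^{\xi_i})$ of which $F(\lambda)$ is a constituent. I would therefore structure the write-up as: (i) recall the precise local-global compatibility statement from the references; (ii) choose the tame principal series type and globalize the weight; (iii) invoke the lifting result to get $r$; (iv) identify $\rho = r|_{G_{F_w}}$ and conclude.
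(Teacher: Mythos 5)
Your proposal is correct and takes essentially the same route as the paper: the paper's proof is a one-line citation of the proof of Proposition 4.2.5 of \cite{LLHL}, and the argument you outline (lift $F(\lambda)$ to a tame principal series type, use exactness of $S(U,-)$ and a characteristic-zero lifting argument, then invoke classical local-global compatibility at $p$ to identify the inertial type and Hodge--Tate weights of $r|_{G_{F_w}}$) is precisely what that reference does. You also correctly anticipate the stronger potentially crystalline statement with parallel Hodge--Tate weights, which the paper notes is delivered by the same proof.
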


\begin{proof}
This follows from the proof of Proposition 4.2.5 of \cite{LLHL}, which in fact gives the stronger result that the lift is potentially crystalline with parallel Hodge-Tate weights $(-n+1,\ldots,-1,0)$.
\end{proof}

\begin{corollary}
Theorem 7.5.5 of \cite{EGH13} holds if we replace $W_{\mathrm{gen}}(\bar{r})$ by $W(\bar{r})$ and assume that $\bar{r}|G_{F_{w}}$ is 10-generic for all places $w|p$ of $F$. Theorem 7.5.6 of \cite{EGH13} holds if we replace $W_{\mathrm{gen}}(\bar{r})$ by $W(\bar{r})$ and ``$\bar{r}$ modular of some strongly generic Serre weight'' by ``$\bar{r}$ modular'' if we assume that $\bar{r}|_{G_{F_{w}}}$ is 12-generic for all places $w|p$ of $F$.
\end{corollary}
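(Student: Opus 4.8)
The plan is to read off the corollary from Theorem \ref{thm:main wt elim} and Proposition \ref{prop:axiom holds}, specialized to the rank-$3$ setting of \cite{EGH13}; so throughout $n=3$, hence $2n=6$, and $p$ is as large as the stated genericity hypotheses force. First I would note that the running assumptions on $\bar r$ in \cite{EGH13} include that every place of $F^{+}$ above $p$ splits in $F$ and is unramified, so for each place $w|p$ of $F$ the field $F_{w}$ is an unramified extension of $\Qp$ and $\bar r|_{G_{F_{w}}}$ is precisely a representation of the kind handled by Theorem \ref{thm:main wt elim}.

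Next I would fix a modular Serre weight $V=\bigotimes_{v'|p}F_{v'}\in W(\bar r)$ and a place $w|p$ of $F$. Taking $v=w|_{F^{+}}$ as the preferred place in the construction preceding Proposition \ref{prop:axiom holds}, and $V'=\bigotimes_{v'|p,\,v'\neq v}F_{v'}$, one has $F_{w}\in W(\bar r|_{G_{F_{w}}})$ by definition, and this weight set satisfies Axiom (WE) by Proposition \ref{prop:axiom holds}. Hence Theorem \ref{thm:main wt elim} applies: if $\bar r|_{G_{F_{w}}}$ is $\delta$-generic with $\delta\geq 2n+1=7$, then every weight in $W(\bar r|_{G_{F_{w}}})$, and in particular $F_{w}$, is $(\delta-6)$-generic. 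Letting $V$ and $w$ vary, this shows that under a uniform $\delta$-genericity hypothesis on all of the $\bar r|_{G_{F_{w}}}$ ($w|p$), every component of every modular Serre weight of $\bar r$ is $(\delta-6)$-generic; equivalently, $W(\bar r)$ equals the subset of modular weights all of whose components are $(\delta-6)$-generic.

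Finally I would compare these thresholds with the genericity built into Theorems 7.5.5 and 7.5.6 of \cite{EGH13}. In those statements $W_{\mathrm{gen}}(\bar r)$ (resp. the notion of a ``strongly generic'' Serre weight) is cut out by a fixed genericity bound which, in the language of Definition \ref{def:gen Serre wts}, is no stronger than $4$-genericity (resp. $6$-genericity) of each component. So, taking $\delta=10$ (resp. $\delta=12$), the previous paragraph gives $W(\bar r)=W_{\mathrm{gen}}(\bar r)$ (resp. shows that every modular weight of $\bar r$ is strongly generic, so that ``$\bar r$ modular'' is equivalent to ``$\bar r$ modular of some strongly generic Serre weight''). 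Since strengthening the genericity hypothesis on the $\bar r|_{G_{F_{w}}}$ only restricts the class of $\bar r$ under consideration and leaves the other hypotheses (automorphy, irreducibility, the Taylor--Wiles and $p\gg0$ conditions, etc.) of Theorems 7.5.5 and 7.5.6 of \cite{EGH13} intact, their conclusions hold verbatim with $W_{\mathrm{gen}}(\bar r)$ replaced by $W(\bar r)$, and with the weaker modularity hypothesis in the case of 7.5.6.

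I expect the only real work to be this last matching step: one must check the precise genericity conventions of \cite{EGH13} against Definition \ref{def:gen Serre wts} (the two measure differences of weight coordinates slightly differently, and there are normalization choices to reconcile), and confirm that $\delta=10$ and $\delta=12$ are indeed enough to dominate them after subtracting $2n=6$. There is no new conceptual ingredient beyond Theorem \ref{thm:main wt elim} and Proposition \ref{prop:axiom holds}; the difficulty is purely bookkeeping.
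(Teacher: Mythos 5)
There is a gap in the numerics, and it traces to a concretely missed feature of Theorem \ref{thm:main wt elim}. The setting of Theorems 7.5.5 and 7.5.6 of \cite{EGH13} is $\GL_3$ with $F_{w}=\Qp$ for every $w\mid p$; that is, $n=3$ \emph{and} $f=1$ for each local piece $\bar r|_{G_{F_w}}$. Theorem \ref{thm:main wt elim} has a sharper conclusion precisely in this situation: when $f=1$ and $\delta\geq n+2$, the modular weights are $(\delta-(n+1))$-generic, which for $n=3$ is $(\delta-4)$-generic, not the generic-$f$ bound $(\delta-2n)=(\delta-6)$ that you applied. Starting from the stated hypotheses $\delta=10$ and $\delta=12$, the $f=1$ bound gives $6$-generic and $8$-generic modular weights, whereas your computation only yields $4$-generic and $6$-generic. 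The paper's proof then needs only that $6$-genericity (in the sense of Definition \ref{def:gen Serre wts}) implies ``generic'' in the sense of \cite{EGH13}, and $8$-genericity implies ``strongly generic''; you instead need the strictly stronger claim that $4$-genericity and $6$-genericity already suffice. You flag this matching step as unverified ``bookkeeping,'' but it is exactly where your argument is at risk: the choice of $10$ and $12$ in the corollary is calibrated to the $f=1$ bound, so the weaker output of the general bound is unlikely to clear the thresholds of \cite{EGH13}.

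The fix is simple: note explicitly that $f=1$ in the \cite{EGH13} application and invoke the last sentence of Theorem \ref{thm:main wt elim}, giving $(\delta-4)$-genericity of all modular weights. With that in place the rest of your reduction (Proposition \ref{prop:axiom holds} to verify Axiom (WE), then componentwise application of Theorem \ref{thm:main wt elim}, then substituting $W(\bar r)$ for $W_{\mathrm{gen}}(\bar r)$ and dropping the ``modular of some strongly generic weight'' hypothesis) is exactly what the paper does.
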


\begin{proof}
If the Galois representation is 10-generic, then Theorem \ref{thm:main wt elim} says that all weights in $W(\bar{r})$ are 6-generic in our sense. This implies that they are generic in the sense of \cite{EGH13}, so $W_{\mathrm{gen}}(\bar{r})=W(\bar{r})$, which proves the first claim. If the Galois representation is 12-generic then all weights in $W(\bar{r})$ are 8-generic which implies that they are strongly generic in the sense of \cite{EGH13}, and this proves the second claim.
\end{proof}

\begin{corollary}
Under the assumption that $\bar{r}$ is 9-generic at all places above $p$, Theorem 7.8 of \cite{LLHLM18} holds with $W_{\mathrm{elim}}(\bar{r})$ replaced by $W(\bar{r})$ and with the assumption ``$\bar{r}$ automorphic of some reachable Serre weight'' by just ``$\bar{r}$ automorphic''. 
\end{corollary}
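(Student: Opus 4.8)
The plan is to argue exactly as in the preceding corollary, feeding in Proposition~\ref{prop:axiom holds} together with Theorem~\ref{thm:main wt elim}. First I would observe that the construction of $W(\bar r|_{G_{F_w}})$ makes sense for \emph{any} choice of weight $V$ of $\bar r$, not just a fixed one, and that for every place $w\mid p$ of $F$ the resulting set satisfies Axiom~(WE) by Proposition~\ref{prop:axiom holds}. Since $\bar r|_{G_{F_w}}$ is $9$-generic by hypothesis, Theorem~\ref{thm:main wt elim} applies with $\delta=9$ (note $9\ge 2n+1$ in the range of $n$ relevant to Theorem~7.8 of \cite{LLHLM18}) and shows that every member of $W(\bar r|_{G_{F_w}})$ is $(9-2n)$-generic in the sense of Definition~\ref{def:gen Serre wts} (and $(8-n)$-generic if $f=1$). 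Letting $V$ range over all weights of $\bar r$, this says precisely that every local factor at $w$ of every weight in $W(\bar r)$ is $(9-2n)$-generic; running this over all $w\mid p$ shows that every weight in $W(\bar r)$ is $(9-2n)$-generic as a Serre weight for $\mathcal G$.

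The remaining work is to match this conclusion against the two notions appearing in Theorem~7.8 of \cite{LLHLM18}. For the hypothesis ``$\bar r$ automorphic of some reachable Serre weight'': if $\bar r$ is automorphic then $W(\bar r)\ne\emptyset$, and by the previous paragraph any $V\in W(\bar r)$ is $(9-2n)$-generic, which (as one checks against the conventions of \cite{LLHLM18}) is more than enough for $V$ to be reachable there; hence $\bar r$ is automorphic of a reachable weight, so the weaker hypothesis ``$\bar r$ automorphic'' already suffices. For the conclusion: $W_{\mathrm{elim}}(\bar r)$ is by definition the subset of $W(\bar r)$ cut out by a fixed genericity condition, and since every weight in $W(\bar r)$ is $(9-2n)$-generic that condition is automatic, so $W_{\mathrm{elim}}(\bar r)=W(\bar r)$. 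Substituting these two facts into the statement of Theorem~7.8 of \cite{LLHLM18} yields the corollary.

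The only step that is not purely formal is the genericity comparison in the second paragraph: one must verify that $(9-2n)$-genericity in the convention of Definition~\ref{def:gen Serre wts} implies both the defining genericity of $W_{\mathrm{elim}}(\bar r)$ and the genericity built into the notion of a reachable Serre weight in \cite{LLHLM18}. This is a bookkeeping exercise of the same flavour as the translation of conventions carried out in the proof of the preceding corollary (and as the identification of Definition~\ref{def:gen gal repns} with Definition~3.7 of \cite{LLHLM18}), and it is precisely here that the numerical value $9$, rather than something smaller, is forced; I do not anticipate any difficulty beyond tracking the exact constants.
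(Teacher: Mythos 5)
Your proposal is correct and follows essentially the same approach as the paper's proof: combine Proposition~\ref{prop:axiom holds} with Theorem~\ref{thm:main wt elim} to conclude that under 9-genericity every modular weight is sufficiently generic, hence ``reachable'' in the terminology of \cite{LLHLM18}, so $W_{\mathrm{elim}}(\bar r)=W(\bar r)$ and the automorphy hypothesis can be relaxed. The paper's proof is equally terse, simply asserting the reachability and pointing to Remark~7.10 of \cite{LLHLM18} for the constant-matching you defer as a ``bookkeeping exercise.''
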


\begin{proof}
By Proposition \ref{prop:axiom holds} and Theorem \ref{thm:main wt elim} the assumption of 9-genericity implies that all modular weights are ``reachable'' in the terminology of \cite{LLHLM18}, so $W_{\mathrm{elim}}(\bar{r})=W(\bar{r})$. See Remark 7.10 of \cite{LLHLM18}. 
\end{proof}

\section*{Acknowledgements}
I thank my thesis advisor Florian Herzig for suggesting that the methods of \cite{EGH13} might be used to prove weight elimination results, for encouraging me to write this paper, and for making some helpful comments. I also thank the anonymous referee for helpful comments. 

The author was supported by an NSERC grant while this work was carried out.

\bibliographystyle{amsalpha}
\bibliography{weight_elimination}

\providecommand{\bysame}{\leavevmode\hbox to3em{\hrulefill}\thinspace}
\providecommand{\MR}{\relax\ifhmode\unskip\space\fi MR }
\providecommand{\MRhref}[2]{%
  \href{http://www.ams.org/mathscinet-getitem?mr=#1}{#2}
}
\providecommand{\href}[2]{#2}
\begin{thebibliography}{LLHLM18}

\bibitem[Bre10]{Bre10}
Christophe Breuil, \emph{The emerging {$p$}-adic {L}anglands programme},
  Proceedings of the {I}nternational {C}ongress of {M}athematicians. {V}olume
  {II}, Hindustan Book Agency, New Delhi, 2010, pp.~203--230. \MR{2827792}

\bibitem[EGH13]{EGH13}
Matthew Emerton, Toby Gee, and Florian Herzig, \emph{Weight cycling and
  {S}erre-type conjectures for unitary groups}, Duke Math. J. \textbf{162}
  (2013), no.~9, 1649--1722. \MR{3079258}

\bibitem[GHS]{GHS}
Toby Gee, Florian Herzig, and David Savitt, \emph{General serre weight
  conjectures}, to appear in J. Eur. Math. Soc.

\bibitem[Her09]{Her09}
Florian Herzig, \emph{The weight in a {S}erre-type conjecture for tame
  {$n$}-dimensional {G}alois representations}, Duke Math. J. \textbf{149}
  (2009), no.~1, 37--116. \MR{2541127}

\bibitem[Her11]{Her11}
\bysame, \emph{The classification of irreducible admissible mod {$p$}
  representations of a {$p$}-adic {${\rm GL}_n$}}, Invent. Math. \textbf{186}
  (2011), no.~2, 373--434. \MR{2845621}

\bibitem[HLM17]{HLM17}
Florian Herzig, Daniel Le, and Stefano Morra, \emph{On {$\mod p$} local-global
  compatibility for {${\rm GL}_3$} in the ordinary case}, Compos. Math.
  \textbf{153} (2017), no.~11, 2215--2286. \MR{3705291}

\bibitem[LLHL18]{LLHL}
Daniel Le, Bao~V. Le~Hung, and Brandon Levin, \emph{Weight elimination in
  serre-type conjectures}, preprint, 2018.

\bibitem[LLHLM18]{LLHLM18}
Daniel Le, Bao~V. Le~Hung, Brandon Levin, and Stefano Morra, \emph{Potentially
  crystalline deformation rings and {S}erre weight conjectures: shapes and
  shadows}, Invent. Math. \textbf{212} (2018), no.~1, 1--107. \MR{3773788}

\bibitem[Wie]{W}
Gabor Wiese, \emph{Galois representations}, course notes available at
  http://math.uni.lu/\textasciitilde wiese/notes/GalRep.pdf, version of 13th
  February 2012.

\end{thebibliography}

\end{document}